\newtheorem{theorem}{Theorem}
\newtheorem{proposition}{Proposition}
\title{{\bf  Short proofs on $k$-extendible graphs}}
\author{
Shenwei Huang\thanks{College of Computer Science, Nankai University, Tianjin 300350, China.  Partially supported by the National Natural Science Foundation of China (11801284) and Natural Science Foundation of Tianjin (20JCYBJC01190).}
\and Yongtang Shi\thanks{Center for Combinatorics and LPMC, Nankai University, Tianjin 300071, China. Partially supported by the National Natural Science Foundation of China (No. 11922112) and Natural Science Foundation of Tianjin (Nos. 20JCZDJC00840, 20JCJQJC00090).}
}
\date{October 7, 2021}
\begin{document}

\maketitle

\begin{abstract} 
In this note, we give short inductive proofs
of two known results on $k$-extendible graphs
based on a property proved in [Qinglin Yu, A note on $n$-extendable graphs. Journal of Graph Theory, 16:349-353, 1992].
\end{abstract}



\section{Introduction}

A graph $G$ is {\em $k$-extendible} if it satisfies the following conditions:

\begin{itemize}
\item $|G|\ge 2k+2$;
\item $G$ is connected;
\item $G$ has a perfect matching;
\item for every matching $M_k$ of $G$ of size $k$, there is a perfect matching of $G$ containing $M_k$.
\end{itemize}

The notion of $k$-extendible graphs was first defined and studied by Plummer \cite{Pl80}.
In particular, 2-extendible bipartite graphs play an important role in the study of P\'olya's permanent problem \cite{Po1913} 
whose solution was obtained by Robertsen, Seymour and Tomas \cite{RST99} and independently by McCuaig \cite{Mc04}.
We refer to the monograph of Lov\'asz and Plummer \cite{LP86} for a detailed account of 1-extendible graphs.

\noindent {\bf Our Contribution.} 
Based on a property of $k$-extendible graphs proved by Yu \cite{Yu92}, we give short inductive proofs of two known results on $k$-extendible graphs.  
Our proofs are much simpler than the existing proofs due to the fact that
the property allows us to apply the inductive hypothesis on subgraphs of the given $k$-extendible graph
rather than on the given graph itself.


\section{Preliminaries}\label{sec:pre}

Let $G=(V,E)$ be a graph. For $S\subseteq V$ and a subgraph $H$ of $G$, {\em the neighborhood of $S$ in $H$}, denoted by $N_H(S)$,
is the set of vertices in $H$ that are adjacent to some vertex in $S$. The size of $N_H(S)$ is denoted by $d_H(S)$.
If $S=\{v\}$, we simply write $N_H(v)$ and $d_H(v)$ instead of $N_H(\{v\})$ and $d_H(\{v\})$, respectively.
For a subset $S\subseteq V$, we denote by $G[S]$ the subgraph induced by $S$.
Given a matching $M$ of $G$, we denote by $V(M)$ the set of vertices that are endvertices of edges in $M$.
The minimum degree and the matching number of $G$ are denoted by $\delta(G)$ and $\alpha'(G)$, respectively.
For other standard terminology we refer to \cite{BM08}.

We start with two simple propositions of $k$-extendible graphs that were obtained by Plummer \cite{Pl80}.
Since the proofs are short,  we include them here for the sake of completeness.

\begin{proposition}[\cite{Pl80}]\label{prop:k-extendible implies (k-1)-extendible}
Every $k$-extendible graph is $(k-1)$-extendible.
\end{proposition}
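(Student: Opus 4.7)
My plan is as follows. The first three conditions of $(k-1)$-extendibility are inherited for free from those of $k$-extendibility: $|V(G)| \geq 2k+2 \geq 2(k-1)+2$, $G$ is connected, and $G$ admits a perfect matching. So the only task is verifying the matching-extension property.

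Given a matching $M$ of $G$ of size $k-1$, my strategy is to reduce to $k$-extendibility by enlarging $M$ by one edge. Concretely, if I can locate an edge $e$ with both endpoints in $V(G) \setminus V(M)$, then $M \cup \{e\}$ is a matching of size $k$ and, by $k$-extendibility, extends to a perfect matching of $G$ which in particular contains $M$. Since $|V(G) \setminus V(M)| \geq 4$, the whole proof reduces to showing that $V(G) \setminus V(M)$ cannot be an independent set.

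To exclude this case I would fix a perfect matching $M^*$ of $G$ and consider the symmetric difference $M \oplus M^*$. Its components are alternating paths and even cycles, and since every vertex of $V(G) \setminus V(M)$ is saturated by $M^*$ but not by $M$, each such vertex is the endpoint of one of the paths. Pick a path $P$ in $M \oplus M^*$ with endpoints $u_1, u_2 \in V(G) \setminus V(M)$; its first and last edges belong to $M^*$. If $V(G) \setminus V(M)$ were independent, then $P$ could not have length $1$, for that would put the $M^*$-edge $u_1u_2$ inside an independent set; hence $P$ has odd length at least $3$. Augmenting along $P$ yields the matching $M_k := M \triangle E(P)$ of size $k$ with $V(M_k) = V(M) \cup \{u_1,u_2\}$.

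The closing observation is that $V(G) \setminus V(M_k) = (V(G) \setminus V(M)) \setminus \{u_1,u_2\}$ is a nonempty subset of the assumed independent set, so $G[V(G) \setminus V(M_k)]$ is edgeless and cannot carry a perfect matching; thus $M_k$ is not contained in any perfect matching of $G$, contradicting $k$-extendibility. The one substantive hurdle is spotting that the augmenting path inside $M \oplus M^*$ automatically leaves the new unsaturated set inside the hypothesized independent set; once this is noted, the contradiction is immediate and the proposition follows.
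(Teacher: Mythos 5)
Your proof is correct and follows essentially the same route as the paper's: both reduce to the case where $V(G)\setminus V(M)$ is independent, produce an $M$-augmenting path, and derive a contradiction because the augmented matching of size $k$ strands a nonempty independent set that no perfect matching could cover. The only cosmetic difference is that you build the augmenting path by hand from the symmetric difference with a perfect matching, whereas the paper simply invokes Berge's Theorem.
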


\begin{proof}
Let $G$ be a $k$-extendible graph.
By contradiction, let $M=\{a_1b_1,\ldots,a_{k-1}b_{k-1}\}$ be a matching of size $k-1$ that is not contained in a perfect matching of $G$. Since $G$ is $k$-extendible, $M$ is a maximal matching of $G$. This implies that 
\[
	S=V(G)\setminus \{a_1,b_1,\ldots,a_{k-1},b_{k-1}\}
\] 
is independent. Since $|G|\ge 2k+2$,  it follows that $|S|\ge 4$.
Since $M$ is not a maximum matching of $G$, it follows from {Berge's Theorem} (see \cite{BM08}) that
there exists an {\em $M$-augmenting path}, that is, an $u$-$v$ path $P$ such that $u,v\notin V(M)$
and edges of $P$ are alternating between $E(G)\setminus M$ and $M$, starting with an edge not in $M$. 
Then $M'=(E(P)\setminus M)\cup (M\setminus E(P))$
is a matching of size $k$ with $V(M')=V(M)\cup \{u,v\}$.
Since $G-V(M')=S\setminus \{u,v\}$ is an independent set of size at least 2, 
$M'$ cannot be extended to a perfect matching of $G$. This is a contradiction.
\end{proof}

\begin{proposition}[\cite{Pl80}]\label{prop:1-extendible graphs are 2-connected}
Every 1-extendible graph is 2-connected.
\end{proposition}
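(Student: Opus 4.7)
The plan is to argue by contradiction: assume $G$ is 1-extendible but has a cut vertex $v$, and derive a parity contradiction with the fact that $G$ admits a perfect matching.

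First I would take the components $C_1,\ldots,C_t$ of $G-v$, where $t\ge 2$ since $v$ is a cut vertex. Because $G$ is connected, $v$ must have at least one neighbor in each $C_i$; pick any such neighbor $u_i \in C_i$. The idea is to use the single edge $vu_i$ as a size-$1$ matching and invoke 1-extendibility.

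Next, for each fixed $i$, the edge $vu_i$ extends to a perfect matching $M_i$ of $G$. Deleting this edge yields a perfect matching of $G - \{v,u_i\}$; since each $C_j$ with $j\neq i$ is a union of connected components of $G-\{v,u_i\}$, the matching $M_i \setminus \{vu_i\}$ restricts to a perfect matching of $C_j$, forcing $|C_j|$ to be even. Letting $i$ range over $\{1,\ldots,t\}$, I conclude that every component of $G-v$ has even order.

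Finally, I would conclude by summing: $|G| = 1 + \sum_{j=1}^{t}|C_j|$ is odd, contradicting the fact that $G$ has a perfect matching (which requires $|G|$ to be even). I do not expect any genuine obstacle in this argument; the only small point to verify is that $v$ really has a neighbor in each $C_i$, which is immediate from connectivity of $G$, and that one can simultaneously conclude the parity for every component by varying the chosen $i$.
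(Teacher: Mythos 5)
Your argument is correct and follows essentially the same route as the paper: take a cut vertex $v$, pick a neighbor $u_i$ of $v$ in each component of $G-v$, and extend each edge $vu_i$ to a perfect matching to extract parity information about the components. The only (cosmetic) difference is the final contradiction: the paper plays the matching through $vu_1$ against the one through $vu_2$ to get $|C_1|$ both odd and even, whereas you conclude every component has even order and contradict the evenness of $|G|$; both are equally valid.
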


\begin{proof}
Suppose by contradiction that $G$ is a 1-extendible graph but not 2-connected.
Then there exists a cut vertex $v$ such that $G-v$ has components $C_1,\ldots,C_t$ for some $t\ge 2$.
Since $G$ is connected, $v$ has a neighbor $u_i\in C_i$ for each $1\le i\le t$.
Since $G$ is 1-extendible, there is a perfect matching containing $vu_1$ and this implies that $|C_1|$ is odd.
On the other hand, there is a perfect matching containing $vu_2$ and this implies that $|C_1|$ is even.
This is a contradiction.
\end{proof}

The following property of  $k$-extendible graphs was proved by Yu \cite{Yu92} whose proof used \autoref{lem:k-extendible graphs are (k+1)-connected} below. Here we give a new proof that avoids the use of \autoref{lem:k-extendible graphs are (k+1)-connected}.
\begin{proposition}[\cite{Yu92}]\label{prop:subgraphs of $k$-extendible graphs}
Let $G$ be a $k$-extendible graph with $k\ge 2$. Then for every edge $uv\in E(G)$, $G-\{u,v\}$ is $(k-1)$-extendible.
\end{proposition}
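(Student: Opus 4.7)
My plan is to verify the four defining conditions of $(k-1)$-extendibility for the graph $G' := G - \{u,v\}$. Three of them are essentially bookkeeping. The size condition is immediate, since $|G'| = |G| - 2 \geq 2k = 2(k-1) + 2$. For the extendibility condition, given any matching $M_{k-1}$ of size $k-1$ in $G'$, the set $M_{k-1} \cup \{uv\}$ is a matching of size $k$ in $G$; since $G$ is $k$-extendible, it extends to a perfect matching $M$ of $G$, and $M \setminus \{uv\}$ is then a perfect matching of $G'$ containing $M_{k-1}$. Taking $M_{k-1} = \emptyset$ (together with \autoref{prop:k-extendible implies (k-1)-extendible} to ensure that $\{uv\}$ itself extends to a perfect matching of $G$) also yields that $G'$ has a perfect matching.

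The genuinely substantive step, which I expect to be the main obstacle, is to prove that $G'$ is connected. I will argue by contradiction: suppose $G'$ has components $C_1, \ldots, C_t$ with $t \geq 2$. Two preparatory observations are in order. First, combining \autoref{prop:k-extendible implies (k-1)-extendible} with \autoref{prop:1-extendible graphs are 2-connected}, $G$ is $2$-connected, so both $G - u$ and $G - v$ are connected; consequently every $C_i$ must contain at least one neighbor of $u$ and at least one neighbor of $v$. Second, the perfect matching of $G'$ constructed above restricts to a perfect matching on each $C_i$ (there are no edges of $G'$ across distinct components), so every $|C_i|$ is even.

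Now I will exploit the hypothesis $k \geq 2$ to force a contradiction. Pick any $u_1 \in C_1$ with $u_1 \in N_G(u)$ and any $v_2 \in C_2$ with $v_2 \in N_G(v)$, and form the matching $M_2 := \{uu_1,\, vv_2\}$ of size $2$ in $G$. Since $G$ is $k$-extendible and $k \geq 2$, repeated application of \autoref{prop:k-extendible implies (k-1)-extendible} shows that $G$ is $2$-extendible, so $M_2$ extends to a perfect matching $M^*$ of $G$. Because $u$ and $v$ are already saturated by $M_2$, the vertices of $C_1 \setminus \{u_1\}$ must be matched among themselves within $C_1$ in $M^*$; but this set has odd cardinality $|C_1| - 1$, which is the desired contradiction. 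This completes the verification of all four conditions.
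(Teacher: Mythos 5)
Your proof is correct and takes essentially the same approach as the paper: both reduce the extendibility condition to adding the edge $uv$ to a given matching of $G'$, and both prove connectedness of $G'$ by using $2$-connectivity of $G$ to find neighbors of $u$ and $v$ in distinct components and then deriving a parity contradiction from a perfect matching containing $\{uu_1, vv_2\}$. The only cosmetic difference is that you first establish that each $|C_i|$ is even via the perfect matching of $G'$, whereas the paper concludes $|C_1|$ is odd and then contradicts $1$-extendibility through the edge $uv$; these are the same two facts combined in a different order.
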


\begin{proof}
Let $e=uv\in E(G)$ and $G'=G-\{u,v\}$. 
By \autoref{prop:k-extendible implies (k-1)-extendible}, $G$ is 1-extendible and so 2-connected by \autoref{prop:1-extendible graphs are 2-connected}.
Since $G$ is $k$-extendible, every matching of size $k-1$ of $G'$ can be extended to a perfect matching of $G'$.
So it remains to show that $G'$ is connected.
Suppose by contradiction that $G'$ has components $C_1,C_2,\ldots,C_t$ for $t\ge 2$.
Since $G$ is 2-connected, each of $u$ and $v$ has a neighbor in each component $C_i$.
Let $s\in C_1$ be a neighbor of $u$ and $t\in C_2$ be a neighbor of $v$. 
Since $G$ is $k$-extendible with $k\ge 2$, there is a perfect matching of $G$ containing $\{us,vt\}$ by \autoref{prop:k-extendible implies (k-1)-extendible}.
This implies that $|C_1|$ is odd. Then there is no perfect matching of $G$ containing $uv$.
This contradicts that $G$ is 1-extendible. Therefore, $G-\{u,v\}$ is $(k-1)$-extendible.
\end{proof}

\section{New Proofs}

In this section, we present our new proofs of two known results on $k$-extendible graphs.
The first result was proved by Plummer \cite{Pl80} on the connectivity of $k$-extendible graphs.
The overall strategy of Plummer \cite{Pl80} 
was to apply the inductive hypothesis on the input graph (due to \autoref{prop:k-extendible implies (k-1)-extendible}) and then used a variation of Menger's Theorem. 
Our proof below, on the other hand, is simpler due to the fact that 
we were able to apply the inductive hypothesis on subgraphs of the input graph
due to \autoref{prop:subgraphs of $k$-extendible graphs}.

\begin{theorem}[\cite{Pl80}]\label{lem:k-extendible graphs are (k+1)-connected}
Every $k$-extendible graph is $(k+1)$-connected.
\end{theorem}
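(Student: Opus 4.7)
The plan is to proceed by induction on $k$, with the base case $k=1$ supplied by \autoref{prop:1-extendible graphs are 2-connected}. For the inductive step, I would assume $k \geq 2$ and that every $(k-1)$-extendible graph is $k$-connected; let $G$ be $k$-extendible. The enabling observation is that for any edge $uv \in E(G)$, the graph $G' = G - \{u,v\}$ is $(k-1)$-extendible by \autoref{prop:subgraphs of $k$-extendible graphs}, hence $k$-connected by the inductive hypothesis. The strategy is to assume, for contradiction, that $G$ has a vertex cut of size at most $k$, and then choose the edge $uv$ cleverly so that this cut of $G$ descends to a vertex cut of $G'$ of size at most $k-1$, directly contradicting the $k$-connectivity of $G'$.

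Concretely, I would fix a \emph{minimum} vertex cut $S$ of $G$ with $|S| \leq k$ and let $A_1, \dots, A_t$ (with $t \geq 2$) be the components of $G-S$. Minimality of $S$ guarantees that every vertex of $S$ has a neighbor in every $A_i$. If $S$ contains an edge $uv$, then $G' - (S\setminus\{u,v\}) = G - S$ remains disconnected, so $S\setminus\{u,v\}$ is a cut of $G'$ of size at most $k-2$, the desired contradiction. Otherwise $S$ is independent, and I would pick any $u \in S$ together with a neighbor $v$ of $u$ in some component $A_i$ chosen so that $G - S - \{v\}$ is still disconnected; then $S\setminus\{u\}$ is a cut of $G'$ of size at most $k-1$, again contradicting $k$-connectivity of $G'$.

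The main obstacle I foresee is verifying that in the independent-$S$ case such a vertex $v$ can always be found. Removing $v$ collapses the disconnection only when the component $A_i$ containing $v$ is a singleton \emph{and} $t=2$; if either $|A_i| \geq 2$ for some component or $t \geq 3$, we can pick $A_i$ and $v$ to preserve disconnection. The residual worry is the pathological situation where $t=2$ and both components are singletons, but then $|V(G)| \leq |S| + 2 \leq k+2$, contradicting $|V(G)| \geq 2k+2$ since $k \geq 2$. Thus the case analysis is exhaustive and each branch yields the contradiction that completes the induction.
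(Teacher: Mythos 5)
Your proof is correct, and it shares the paper's overall architecture: induction on $k$ with \autoref{prop:1-extendible graphs are 2-connected} as the base case, and \autoref{prop:subgraphs of $k$-extendible graphs} as the engine that makes $G-\{u,v\}$ a $k$-connected graph to which the inductive hypothesis applies. Where you diverge is in how the putative small cut is pushed down to $G-\{u,v\}$. The paper argues directly: it first derives $\delta(G)\ge k+1$ (by embedding any vertex $v$ in some $k$-connected $G-\{u,w\}$), then takes an arbitrary $S$ with $|S|=k$ and splits on whether a fixed $s\in S$ has its chosen neighbor inside or outside $S$, using the degree bound to reattach the outside neighbor. You instead argue by contradiction from a \emph{minimum} cut $S$, splitting on whether $S$ spans an edge, and in the independent case you exploit the standard fact that every vertex of a minimum cut has neighbors in every component. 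Your route avoids proving the minimum degree bound but pays for it with the singleton-component edge case, which you correctly dispatch via $|V(G)|\ge 2k+2$; all the steps check out (in particular, $G'-(S\setminus\{u\})=G-S-\{v\}$ and the observation that deleting $v$ preserves disconnection unless $t=2$ and $A_i=\{v\}$). Both arguments are sound and of comparable length; the paper's has the mild bonus of establishing $\delta(G)\ge k+1$ explicitly along the way.
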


\begin{proof}[Our Proof]
Let $G$ be a $k$-extendible graph.
We prove by induction on $k$.
The base case is \autoref{prop:1-extendible graphs are 2-connected}.
Now suppose that $k\ge 2$ and the statement is true for $(k-1)$-extendible graphs. 
By \autoref{prop:subgraphs of $k$-extendible graphs}, $G-\{u,v\}$ is $(k-1)$-extendible
for every edge $uv\in E(G)$ and so is $k$-connected by the inductive hypothesis.
By \autoref{prop:k-extendible implies (k-1)-extendible} and \autoref{prop:1-extendible graphs are 2-connected},
it follows that $\delta(G)\ge 2$. For any vertex $v\in V(G)$, let $u$ be a neighbor of $v$.
Since $d(u)\ge 2$, $u$ has a neighbor $w$ other than $v$. 
Since $H=G-\{u,w\}$ is $k$-connected, $d_H(v)\ge k$ and thus $d_G(v)\ge k+1$.
This shows that $\delta(G)\ge k+1$.

Now let $S\subseteq V(G)$ be an arbitrary set with $|S|=k$. Let $s\in S$ and $t$ be a neighbor of $s$.
We show that $G-S$ is connected.

\noindent {\bf Case 1.} $t\in S$. Then 
\[
	G-S=(G-\{s,t\})-(S\setminus \{s,t\})
\]
is connected, since $G-\{s,t\}$ is $k$-connected.

\noindent {\bf Case 2.} $t\notin S$. Let $G'=G-(S\cup \{t\})$. Note that 
\[
	G'=(G-\{s,t\})-(S\setminus \{s\}).
\]
Since $G-\{s,t\}$ is $k$-connected, $G'$ is connected. Since $\delta(G)\ge k+1$, $t$ has a neighbor in $G'$. Therefore, 
\[
	G-S=G[V(G')\cup \{t\}]
\]
is connected. 
\end{proof}

The second result is on $k$-extendible bipartite graphs.
The celebrated Hall's Theorem gives a necessary and sufficient condition for a balanced bipartite graph to have a perfect matching.
It turns out that $k$-extendible bipartite graphs have a similar characterization.

\begin{theorem}[\cite{BP71}]\label{thm:characterization of $k$-extendible bipartite graphs}
Let $G=(X,Y)$ be a connected bipartite graph with a perfect matching and $|G|\ge 2k+2$. 
Then $G$ is $k$-extendible if and only if $|N(A)|\ge |A|+k$ for every subset $A\subseteq X$ with $1\le |A|\le |X|-k$.
\end{theorem}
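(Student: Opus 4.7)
The plan is to prove the two implications separately. For the ``if'' direction, given any matching $M_k$ of $G$ of size $k$, it suffices to find a perfect matching of $G'' := G - V(M_k)$. Since $M_k$ saturates exactly $k$ vertices in each of $X$ and $Y$, $G''$ is balanced bipartite; for any nonempty $A'' \subseteq X \setminus V(M_k)$ we have $|A''| \le |X|-k$, so the hypothesis yields $|N_G(A'')| \ge |A''| + k$, and subtracting the $k$ vertices of $V(M_k) \cap Y$ still leaves $|N_{G''}(A'')| \ge |A''|$, so Hall's theorem produces a perfect matching of $G''$.

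For the ``only if'' direction, I would induct on $k \ge 1$, using \autoref{prop:subgraphs of $k$-extendible graphs} in the inductive step. Given $A \subseteq X$ with $1 \le |A| \le |X|-k$, the key move is to produce an edge $uv \in E(G)$ with $u \in X \setminus A$ and $v \in N_G(A)$. Granting this, $G' := G - \{u,v\}$ is $(k-1)$-extendible by \autoref{prop:subgraphs of $k$-extendible graphs} (when $k \ge 2$), and $A \subseteq X \setminus \{u\}$ satisfies $1 \le |A| \le |X|-k = |X \setminus \{u\}| - (k-1)$, so the inductive hypothesis applied in $G'$ yields $|N_{G'}(A)| \ge |A| + k - 1$. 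Since $v \in N_G(A)$ and $N_{G'}(A) = N_G(A) \setminus \{v\}$, this lifts to $|N_G(A)| \ge |A| + k$. The base case $k = 1$ follows the same scheme: \autoref{prop:subgraphs of $k$-extendible graphs} is unavailable, but the edge $uv$ extends to a perfect matching of $G$ by $1$-extendibility, so $G - \{u,v\}$ has a perfect matching, and Hall's theorem applied in $G - \{u,v\}$ replaces the inductive hypothesis, yielding $|N_G(A)| \ge |A| + 1$.

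The main obstacle is producing the edge $uv$ with $u \in X \setminus A$ and $v \in N_G(A)$. I would settle this via connectedness of $G$: if every vertex of $N_G(A)$ had all its neighbors inside $A$, then $A \cup N_G(A)$ would have no edges leaving it and so would be a union of components of $G$; connectedness would then force $A \cup N_G(A) = V(G)$, which, since $A \subseteq X$ and $N_G(A) \subseteq Y$, requires $A = X$, contradicting $|A| \le |X|-k < |X|$.
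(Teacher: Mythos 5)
Your proposal is correct and follows essentially the same route as the paper's first proof: sufficiency by deleting $V(M_k)$ and applying Hall's condition to the remaining balanced bipartite graph, and necessity by induction on $k$ using \autoref{prop:subgraphs of $k$-extendible graphs} after locating an edge from $X\setminus A$ to $N(A)$ via connectedness. Your treatment of the base case (Hall's theorem in $G-\{u,v\}$ rather than the paper's direct contradiction) and your explicit justification that the edge $uv$ exists are only cosmetic variations on the published argument.
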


\autoref{thm:characterization of $k$-extendible bipartite graphs} was first stated and proved by Brualdi and Perfect \cite{BP71}
in the language of matrices (Theorem 2.1 in \cite{BP71}). Here we give two graph-theoretical proofs. 
The first one relies on \autoref{prop:subgraphs of $k$-extendible graphs}
while the second one is based on the K\"{o}nig-Ore Formula.

\begin{theorem}[The K\"{o}nig-Ore Formula]
Let $G=(X,Y)$ be a bipartite graph. Then
\[
	\alpha'(G)=|X|-\max_{S\subseteq X}(|S|-|N(S)|).
\]
\end{theorem}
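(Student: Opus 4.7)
The plan is to prove the two inequalities separately, writing $d := \max_{S\subseteq X}(|S|-|N(S)|)$ for brevity (the choice $S=\emptyset$ gives $d\ge 0$).

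For the upper bound $\alpha'(G)\le |X|-d$, I would fix any $S\subseteq X$ and observe that every matching edge meeting $S$ has its other endpoint in $N(S)$. Hence at most $|N(S)|$ vertices of $S$ can be saturated by any matching $M$, so at least $|S|-|N(S)|$ vertices of $X$ are unsaturated by $M$. Therefore $|M|\le |X|-(|S|-|N(S)|)$, and maximizing over $S$ yields $\alpha'(G)\le |X|-d$.

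For the lower bound, the natural approach is a padding trick that reduces the problem to Hall's theorem. I would build a bipartite graph $G'$ with parts $X$ and $Y\cup Z$, where $Z$ is a set of $d$ fresh vertices each joined to every vertex of $X$. For any $A\subseteq X$, the defining property of $d$ gives
\[
|N_{G'}(A)| \;=\; |N_G(A)|+d \;\ge\; |N_G(A)|+|A|-|N_G(A)| \;=\; |A|,
\]
so Hall's condition holds in $G'$. Hall's theorem then supplies a matching $M'$ of $G'$ saturating $X$; at most $d$ of its edges touch $Z$, so discarding those edges leaves a matching in $G$ of size at least $|X|-d$, which is the desired bound.

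The conceptual heart of the argument, and the only real obstacle, is realizing that the deficiency $d$ is exactly the slack one must add on the $Y$-side to rescue Hall's condition; everything else amounts to routine bookkeeping and a single appeal to Hall's theorem.
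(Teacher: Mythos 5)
Your proof is correct. Note, however, that the paper itself offers no proof of the K\"onig--Ore Formula: it is stated as a known result and invoked as a black box (in the sufficiency part of the first proof and in the second proof of the bipartite characterization), so there is no in-paper argument to compare yours against. What you give is the standard derivation of the deficiency form of Hall's theorem: the counting argument that any matching must leave at least $|S|-|N(S)|$ vertices of $X$ unsaturated yields $\alpha'(G)\le |X|-d$, and the padding construction (adjoining $d$ new vertices to the $Y$-side, each joined to all of $X$) restores Hall's condition and yields the reverse inequality. The only cosmetic caveat is that the identity $|N_{G'}(A)|=|N_G(A)|+d$ holds for nonempty $A$ (for $A=\emptyset$ the left side is $0$), but Hall's condition is vacuous there, so nothing is lost. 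Both halves are sound and the appeal to Hall's theorem is legitimate, as Hall's theorem is strictly weaker than the formula being proved.
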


\begin{proof}[Our First Proof of \autoref{thm:characterization of $k$-extendible bipartite graphs}]
We first prove the sufficiency. 
Take a matching 
\[
	M=\{x_1y_1,\ldots,x_ky_k\}
\] 
of size $k$. Let $X'=X\setminus \{x_1,\ldots,x_k\}$
and $Y'=Y\setminus \{y_1,\ldots,y_k\}$. Denote by $H$ the subgraph of $G$ induced by $X'\cup Y'$. Note that every nonempty subset
$A$ of $X'$ has $1\le |A|\le |X|-k$. It follows from the assumption that $|N_G(A)|\ge |A|+k$. This implies that $|N_H(A)|\ge |A|$.
By the K\"{o}nig-Ore Formula, $H$ has a perfect matching $M_H$. 
It follows that $M_H\cup M$ is a perfect matching of $G$ containing $M$.
This shows that $G$ is $k$-extendible.

We now prove the necessity by induction on $k$.

\noindent {\bf Base Case:} $k=1$. By contradiction, let $A$ be a subset of $X$ with $1\le |A|\le |X|-1$ such that $|N(A)|<|A|+1$.
Since $G$ has a perfect matching, $|N(A)|\ge |A|$. It then follows that $|N(A)|=|A|$. Since $G$ is connected, there is an edge $e=xy$
between $N(A)$ and $X\setminus A$. So there is no perfect matching of $G$ containing $e$, simply because there are not enough
vertices in $G-\{x,y\}$ to match vertices in $A$. 

\noindent {\bf Inductive Step:} 
We assume that $k\ge 2$ and the statement is true for $k-1$. 
let $A$ be an arbitrary subset of $X$ with $1\le |A|\le |X|-k$. If $N(A)=Y$, then
\[
	|N(A)|=|Y|=|X|\ge |A|+k.
\]
So we may assume that $N(A)\neq Y$. Since $G$ is connected, there is an edge $xy\in E(G)$ such that
$y\in N(A)$ and $x\in X\setminus A$. Let $G'=G-\{x,y\}=(X\setminus \{x\},Y\setminus \{y\})$.
By \autoref{prop:subgraphs of $k$-extendible graphs},
$G'$ is $(k-1)$-extendible. On the other hand, $A\subseteq X\setminus \{x\}$ has
\[
	1\le |A|\le |X|-k=(|X|-1)-(k-1).
\]
By the inductive hypothesis, $|N_{G'}(A)|\ge |A|+(k-1)$. Since $N_G(A)=N_{G'}(A)\cup \{y\}$, it follows that
$|N_{G}(A)|\ge |A|+k$.
\end{proof} 

\begin{proof}[Our Second Proof of \autoref{thm:characterization of $k$-extendible bipartite graphs}]

The difference lies in the inductive step of the necessity.
We assume that $k\ge 2$ and the statement is true for $k-1$. 
By contradiction,  
let $A$ be a subset of $X$ with $1\le |A|\le |X|-k$ such that $|N(A)|<|A|+k$. 
By \autoref{prop:k-extendible implies (k-1)-extendible}, $G$ is $(k-1)$-extendible. 
By the inductive hypothesis,
$|N(A)|\ge |A|+(k-1)$. It follows that
\begin{equation}\label{equ:1}
|N(A)|=|A|+(k-1).
\end{equation}
Let $X'=X\setminus A$ and write $B=N(A)$. Note that $|B|\ge k$ for otherwise \autoref{equ:1} would be contradicted. 
Denote by $H$ the subgraph
induced by $B\cup X'$. If $H$ has a matching of size $k$, then it cannot be extended to a perfect matching of $G$
(because there are not enough vertices to match vertices in $A$). So the matching number of $H$ is at most $k-1$.
By {the K\"{o}nig-Ore Formula}, 
\[
	\alpha'(H)=|B|-\max_{S\subseteq B}(|S|-|N_H(S)|)\le k-1.
\]
So there exists a subset $S\subseteq B$ such that $|S|-|N_H(S)|\ge |B|-(k-1)$. Since $|B|\ge k$, $|S|\ge 1$.
Moreover, $|N_H(S)|\le |S|+(k-1)-|B|$.
Therefore,
\[
	|N_G(S)|\le |A|+|N_H(S)|\le |A|+|S|+(k-1)-|B|=|S|,
\] 
where the last equality follows from \autoref{equ:1}.
Since $|A|\le |X|-k$, it follows that $|B|=|N(A)|=|A|+(k-1)\le |X|-1$.
Hence, $S\subseteq B$ violates the condition for $G$ to be 1-extendible.
\end{proof}

\section{Concluding Remarks}

The fact that our proof of \autoref{prop:subgraphs of $k$-extendible graphs} does not use \autoref{lem:k-extendible graphs are (k+1)-connected} makes our new proof of \autoref{lem:k-extendible graphs are (k+1)-connected} self-contained.
To the best of our knowledge, our first proof of \autoref{thm:characterization of $k$-extendible bipartite graphs}
is new and self-contained. 
The second proof, in essence, is the graph counterpart of the proof given in \cite{BP71} stated in matrix language.
That proof used the Frobenius-K\"{o}nig Theorem which is the matrix counterpart of the the K\"{o}nig-Ore Formula.
However, we feel that it may be convenient for graph theorists to have a graph-theoretical proof. 
So we include our second proof here as well.


\begin{thebibliography}{1}

\bibitem{BM08}
J.~A. Bondy and U.~S.~R. Murty.
\newblock {\em Graph Theory}.
\newblock Springer, 2008.

\bibitem{BP71}
R.~A. Brualdi and H.~Perfect.
\newblock Extension of partial diagonals of matrices {I}.
\newblock {\em Monatshefte f\"{u}r Mathematik}, 75:385--379, 1971.

\bibitem{LP86}
L.~Lov\'asz and M.~D. Plummer.
\newblock {\em Matching Theory}.
\newblock North–Holland, 1986.

\bibitem{Mc04}
W.~McCuaig.
\newblock P\'olya's permanent problem.
\newblock {\em The Electronic Journal of Combinatorics}, 11:R79, 2004.

\bibitem{Pl80}
M.~D. Plummer.
\newblock On $n$-extendible graphs.
\newblock {\em Discrete Mathematics}, 31:201--210, 1980.

\bibitem{Po1913}
G.~P\'olya.
\newblock Aufgabe 424.
\newblock {\em Arch. Math. Phys.}, 20:271, 1913.

\bibitem{RST99}
N.~Robertsen, P.~Seymour, and R.~Thomas.
\newblock Permanents, {P}faffian orientations, and even directed circuits.
\newblock {\em Annals of Mathematics}, 150:929--975, 1999.

\bibitem{Yu92}
Q.~Yu.
\newblock A note on $n$-extendable graphs.
\newblock {\em Journal of Graph Theory}, 16:349--353, 1992.

\end{thebibliography}




\end{document}